\numberwithin{equation}{section}
\def\PP{\mathbb{P}}
\def\11{\mathbbm{1}}
\def\E{\mathbb{E}}
\def\P{\mathbb{P}}
\def\R{\mathbb{R}}
\def\d{\partial}
\def\rM{\rho_{\partial M}}
\newtheorem{thm}{Theorem}[section]
\newtheorem{lem}[thm]{Lemma}
\theoremstyle{remark}
\newtheorem{rem}{Remark}
\begin{document}

\title{Criteria for exponential convergence to quasi-stationary distributions and applications to multi-dimensional diffusions}

\author{Nicolas Champagnat$^{1,2,3}$, Kol\'eh\`e Abdoulaye Coulibaly-Pasquier$^{1,2}$,\\ Denis Villemonais$^{1,2,3}$}

\footnotetext[1]{IECL, Universit\'e de Lorraine, Site de Nancy, B.P. 70239, F-54506 Vandœuvre-lès-Nancy Cedex, France}
\footnotetext[2]{CNRS, IECL, UMR 7502, Vand{\oe}uvre-l\`es-Nancy, F-54506, France}  
\footnotetext[3]{Inria, TOSCA team, Villers-l\`es-Nancy, F-54600, France.\\
  E-mail: Nicolas.Champagnat@inria.fr, Kolehe.Coulibaly@univ-lorraine.fr,\\ Denis.Villemonais@univ-lorraine.fr}

\maketitle

\begin{abstract}
  We consider general Markov processes with absorption and provide criteria ensuring the exponential convergence in total
  variation of the distribution of the process conditioned not to be absorbed. The first one is based
  on two-sided estimates on the transition kernel of the process and the second one on gradient estimates on its semigroup. We apply
  these criteria to multi-dimensional diffusion processes in bounded domains of $\R^d$ or in compact Riemannian manifolds with
  boundary, with absorption at the boundary.
\end{abstract}

\noindent\textit{Keywords:} Markov processes; diffusions in Riemannian manifolds; diffusions in bounded domains; absorption at the
boundary; quasi-stationary distributions; $Q$-process; uniform exponential mixing; two-sided estimates; gradient estimates.

\medskip\noindent\textit{2010 Mathematics Subject Classification.} Primary: {60J60; 37A25; 60B10; 60F99}. Secondary: {60J75; 60J70}

\section{Introduction}
\label{sec:intro}

Let $X$ be a Markov process evolving in a measurable state space $E\cup\{\d\}$ absorbed at $\d\notin E$ at time $\tau_\d=\inf\{t\geq
0,\ X_t=\d\}$. We assume that $\P_x(t<\tau_\d)>0$, for all $x\in E$ and all $t\geq 0$, where $\P_x$ is the law of $X$ with initial
position $x$. We consider the problem of existence of a probability measure $\alpha$ on $E$ and of positive constants $B,\gamma>0$
such that, for all initial distribution $\mu$ on $E$,
\begin{align}
  \label{eq:intro-expo-cv}
  \left\|\PP_\mu(X_t\in\cdot\mid t<\tau_\partial)-\alpha(\cdot)\right\|_{TV}\leq B e^{-\gamma t},\quad \forall t\geq 0,
\end{align}
where $\P_\mu$ is the law of $X$ with initial distribution $\mu$ and $\|\cdot\|_{TV}$ is the total variation norm on finite signed
measures. It is well known that~\eqref{eq:intro-expo-cv} entails that $\alpha$ is the unique quasi-stationary distribution for $X$,
that is the unique probability measure satisfying
\begin{align*}
  \alpha(\cdot)=\P_\alpha\left(X_t\in \cdot\mid t<\tau_\d\right),\quad \forall t\geq 0.
\end{align*}

Our goal is to provide sufficient conditions for~\eqref{eq:intro-expo-cv} with applications when $X$ is a diffusion process, absorbed
at the boundary of a domain of $\R^d$ or of a Riemannian manifold. Our first result (Theorem~\ref{thm:two-sided-estimates}) shows
that a two-sided estimate for the transition kernel of a general absorbed Markov process is sufficient to
ensure~\eqref{eq:intro-expo-cv}. This criterion applies in particular to diffusions with smooth coefficients in bounded domains of
$\R^d$ with irregular boundary. Our second result (Theorem~\ref{thm:general-criterion}) concerns Markov processes satisfying gradient
estimates (as in Wang~\cite{wang-04} and Priola and Wang~\cite{Priola2006}), irreducibility conditions and controlled probability of
absorption near the boundary. It applies to diffusions with less regular coefficients in smooth domains of $\R^d$ and to drifted
Brownian motions in compact Riemannian manifolds with $C^2$ boundary.

\bigskip 

Convergence of conditioned diffusion processes have been already obtained for diffusions in domains of $\R^d$, mainly using spectral
theoretic arguments (see for instance \cite{CCLMMS09,Kolb2011,Littin2012,miura-14,HeningKolb,
ChampagnatVillemonais2015c} for $d=1$
and \cite{Cattiaux2008,KnoblochPartzsch2010,DelMoralVillemonais2015} for $d\geq 2$). Among these
references,~\cite{KnoblochPartzsch2010,DelMoralVillemonais2015} give the most general criteria for diffusions in dimension $2$ or
more. Using two-sided estimates and spectral properties of the infinitesimal generator of $X$, Knobloch and
Partzsch~\cite{KnoblochPartzsch2010} proved that~\eqref{eq:intro-expo-cv} holds for a class of diffusion processes evolving in~$\R^d$
($d\geq 3$) with $C^1$ diffusion coefficient, drift in a Kato class and $C^{1,1}$ domain. In~\cite{DelMoralVillemonais2015}, the
authors obtain~\eqref{eq:intro-expo-cv} for diffusions with global Lipschitz coefficients (and additional local regularity near the
boundary) in a domain with $C^2$ boundary. These results can be recovered with our method (see Section~\ref{sec:tse}
and~\ref{sec:particular-case} respectively). When the diffusion is a drifted Brownian motion with drift deriving from a potential,
the authors of~\cite{Cattiaux2008} obtain existence and uniqueness results for the quasi-stationary distribution in cases with
singular drifts and unbounded domains with non-regular boundary that do not enter the settings of this paper.

Usual tools to prove convergence in total variation for processes without absorption involve coupling arguments: for example,
contraction in total variation norm for the non-conditioned semi-group can be obtained using mirror and parallel coupling,
see~\cite{lindvall-rogers-86,wang-04,Priola2006}, or lower bounds on the density of the process that could be obtained for example
using Aronson-type estimates or Malliavin calculus~\cite{aronson,wang,zhang,Nualart2006}. However, on the one hand, lower bounds on transition
densities are not sufficient to control conditional distributions, and on the other hand, the process conditioned not to be killed up
to a given time $t>0$ is a time-inhomogeneous diffusion process with a singular drift for which these methods fail. For instance, a
standard $d$-dimensional Brownian motion $(B_t)_{t\geq 0}$ conditioned not to exit a smooth domain $D\subset\R^d$ up to a time $t>0$
has the law of the solution $(X^{(t)}_s)_{s\in[0,t]}$ to the stochastic differential equation
\begin{align*}
  dX^{(t)}_s=dB_s+\left[\nabla \ln \P_{\cdot}(t-s<\tau_\d)\right](X^{(t)}_s)ds.
\end{align*}
where the drift term is singular near the boundary. Our approach is thus to use the following condition, which is actually equivalent
to the exponential convergence~\eqref{eq:intro-expo-cv} (see~\cite [Theorem~2.1]{champagnat-villemonais-15}).

\paragraph{Condition~(A).} There exist $t_0,c_1,c_2>0$ and a probability measure $\nu$ on $E$ such that 
\begin{itemize}
\item[(A1)]  for all $x\in E$,
  $$
  \PP_x(X_{t_0}\in\cdot\mid t_0<\tau_\partial)\geq c_1\nu(\cdot)
  $$
\item[(A2)]  for all $z\in E$ and all $t\geq 0$,
  $$
  \PP_{\nu}(t<\tau_\partial)\geq c_2\PP_z(t<\tau_\partial).
  $$
\end{itemize}

More precisely, if Condition~(A) is satisfied, then, for all probability measure $\pi$ on $E$,
\begin{align*}
  \left\|\PP_\pi(X_t\in\cdot\mid t<\tau_\partial)-\alpha(\cdot)\right\|_{TV}\leq 2(1-c_1c_2)^{\lfloor t/t_0\rfloor}
\end{align*}
and it implies that, for all probability measures $\pi_1$ and $\pi_2$ on $E$,
\begin{multline}
  \label{eq:intro-contr}
  \left\|\P_{\pi_1}(X_t\in\cdot\mid t<\tau_\d)-\P_{\pi_2}(X_t\in\cdot\mid t<\tau_\d)\right\|_{TV}\\ \leq \frac{(1-c_1c_2)^{\lfloor
      t/t_0\rfloor}}{c(\pi_1)\vee c(\pi_2)}\|\pi_1-\pi_2\|_{TV},
\end{multline}
where $c(\pi_i)=\inf_{t\geq 0}\P_{\pi_i}(t<\tau_\d)/\sup_{z\in E}\P_{z}(t<\tau_\d)$ (see Appendix~\ref{sec:appendix} for a proof of this improvement
of~\cite[Corollary~2.2]{champagnat-villemonais-15}, where the same inequality is obtained with $c(\pi_1)\wedge c(\pi_2)$ instead of
$c(\pi_1)\vee c(\pi_2)$).

Several other properties can also be deduced from Condition~(A). For instance, $e^{\lambda_0 t}\P_x(t<\tau_\d)$
converges when $t\rightarrow+\infty$, uniformly in $x$, to a positive eigenfunction $\eta$ of the infinitesimal
generator of $(X_t,t\geq 0)$ for the eigenvalue $-\lambda_0$ characterized by the relation
$\P_\alpha(t<\tau_\d)=e^{-\lambda_0 t}$, $\forall t\geq 0$~\cite[Proposition~2.3]{champagnat-villemonais-15}. Moreover,
it implies a spectral gap property~\cite[Corollary~2.4]{champagnat-villemonais-15}, the existence and exponential
ergodicity of the so-called $Q$-process, defined as the process $X$ conditioned to never hit the
boundary~\cite[Theorem~3.1]{champagnat-villemonais-15} and a conditional ergodic
property~\cite{ChampagnatVillemonais2016U}. Note that we do not assume that $\P_x(\tau_\d<\infty)=1$, which is only
required in the proofs of~\cite{champagnat-villemonais-15} in order to obtain $\lambda_0>0$. Indeed, the above
inequalities remain true under Condition~(A), even if $\P_x(\tau_\d<+\infty)<1$ for some $x\in E$. The only difference
is that, in this case, $E':=\{x\in E,\ \P_x(\tau_\d<+\infty)=0\}$ is non-empty, $\alpha$ is a classical stationary
distribution such that $\alpha(E')=1$ and $\lambda_0=0$.

The paper is organized as follows. In Section~\ref{sec:tse}, we state and prove a sufficient criterion for~\eqref{eq:intro-expo-cv}
based on two-sided estimates. In Section~\ref{sec:general-result}, we prove~\eqref{eq:intro-expo-cv} for
Markov processes satisfying gradient estimates, irreducibility conditions and controled probability of absorption near the boundary.
In Section~\ref{sec:particular-case}, we apply this result to diffusions in smooth domains of $\R^d$ and to drifted Brownian motions
in compact Riemannian manifolds with smooth boundary. Section~\ref{sec:proof} is devoted to the proof of the criterion of
Section~\ref{sec:general-result}. Finally, Appendix~\ref{sec:appendix} gives the proof of~\eqref{eq:intro-contr}.

\section{Quasi-stationary behavior under two-sided estimates}
\label{sec:tse}

In this section, we consider as in the introduction a general absorbed Markov process $X$ in $E\cup\{\d\}$ satisfying two-sided
estimates: there exist a time $t_0>0$, a constant $c>0$, a positive measure $\mu$ on $E$ and a measurable function
$f:E\rightarrow(0,+\infty)$ such that
\begin{align}
  \label{eq:tse}
  c^{-1}f(x)\mu(\cdot)\leq \P_x(X_{t_0}\in\cdot)\leq cf(x)\mu(\cdot),\ \forall x\in E.
\end{align}
Note that this implies that $f(x)\mu(E)\leq c$ for all $x\in E$, hence $\mu$ is finite and $f$ is bounded. As a consequence, one can
assume without loss of generality that $\mu$ is a probability measure and then $\|f\|_\infty\leq c$. Note also that $f(x)>0$ for all
$x\in E$ entails that $\P_x(t_0<\tau_\d)>0$ for all $x\in E$ and hence, by Markov property, that $\P_x(t<\tau_\d)>0$ for all $x\in E$
and all $t>0$, as needed to deduce~\eqref{eq:intro-expo-cv} from Condition~(A) (see~\cite{champagnat-villemonais-15}).

Estimates of the form~\eqref{eq:tse} are well known for diffusion processes in a bounded domain of $\R^d$ since the seminal paper of
Davies and Simon~\cite{davies-simon-84}. The case of standard Brownian motion in a bounded $C^{1,1}$ domain of $\R^d$, $d\geq 3$ was
studied in~\cite{zhang-02}. This result has then been extended in~\cite{kim-song-07} to diffusions in a bounded $C^{1,1}$ domain in
$\R^d$, $d\geq 3$, with infinitesimal generator
$$
L=\frac{1}{2}\sum_{i,j=1}^da_{ij}\partial_{i}\partial_j+\sum_{i=1}^d b_i\partial_i,
$$
with symmetric, uniformly elliptic and $C^1$ diffusion matrix $(a_{ij})_{1\leq i,j\leq d}$, and with drift $(b_i)_{1\leq i\leq d}$ in
the Kato class $K_{d,1}$, which contains $L^p(dx)$ functions for $p>d$. Diffusions on bounded, closed Riemannian manifolds with
irregular boundary and with generator
$$
L=\Delta+X,
$$
where $\Delta$ is the Laplace-Beltrami operator and $X$ is a smooth vector field, were also studied in~\cite{lierl-saloffcoste-14}.
Two-sided estimates are also known for processes with jumps~\cite{chen-al-12,bogdan-al-10,chen-al-11,kim-kim-14,chen-al-15}.

\begin{thm}
  \label{thm:two-sided-estimates}
  Assume that there exist a time $t_0>0$, a constant $c>0$, a probability measure $\mu$ on $E$ and a measurable function
  $f:E\rightarrow(0,+\infty)$ such that~\eqref{eq:tse} holds. Then Condition~(A) is satisfied with $\nu=\mu$, $c_1=c^{-2}$ and
  $c_2=c^{-3}\mu(f)$. In addition, for all probability measures $\pi_1$ and $\pi_2$ on $E$, we have 
  \begin{multline}
    \label{eq:contraction}
    \left\|\P_{\pi_1}(X_t\in\cdot\mid t<\tau_\d)-\P_{\pi_2}(X_t\in\cdot\mid t<\tau_\d)\right\|_{TV}\\ \leq c^3
    \frac{(1-c^{-5}\mu(f))^{\lfloor t/t_0\rfloor}}{\pi_1(f)\vee \pi_2(f)}\|\pi_1-\pi_2\|_{TV},
  \end{multline}
  Moreover, the unique quasi-stationary distribution $\alpha$ for $X$ satisfies
  \begin{align}
    \label{eq:borne-QSD}
    c^{-2}\mu\leq\alpha\leq c^{2}\mu.
  \end{align}
\end{thm}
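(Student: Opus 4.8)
The plan is to deduce Condition~(A) directly from the two-sided estimate~\eqref{eq:tse}, invoke the consequences of Condition~(A) recalled in the introduction (from~\cite{champagnat-villemonais-15}) for the existence, uniqueness and convergence statements, and then obtain~\eqref{eq:contraction} and~\eqref{eq:borne-QSD} by feeding~\eqref{eq:tse} into~\eqref{eq:intro-contr} and into the stationarity identity for $\alpha$. First I record the normalisation remarks already in the text: one may take $\mu(E)=1$, and then the lower bound in~\eqref{eq:tse} evaluated at $E$ gives $c^{-1}f(x)\le\P_x(X_{t_0}\in E)\le1$, so $\|f\|_\infty\le c$, while compatibility of the two bounds forces $c\ge1$; in particular $\mu(f)\le c$. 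For~(A1): integrating the upper bound in~\eqref{eq:tse} over $E$ gives $\P_x(t_0<\tau_\d)\le cf(x)$, so dividing the lower bound in~\eqref{eq:tse} by $\P_x(t_0<\tau_\d)$ yields $\P_x(X_{t_0}\in\cdot\mid t_0<\tau_\d)\ge c^{-2}\mu(\cdot)$, i.e.\ (A1) with $\nu=\mu$ and $c_1=c^{-2}$.

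For~(A2) the key tool is the Markov property at time $t_0$: for $z\in E$ and $t\ge0$,
\[
  \P_z(t_0+t<\tau_\d)=\int_E\P_y(t<\tau_\d)\,\P_z(X_{t_0}\in dy),
\]
which together with~\eqref{eq:tse} gives $c^{-1}f(z)\,\P_\mu(t<\tau_\d)\le\P_z(t_0+t<\tau_\d)\le cf(z)\,\P_\mu(t<\tau_\d)$. Using $f(z)\le c$ in the upper bound and integrating the lower bound in the variable $z$ against $\mu$, I get $\P_\mu(s<\tau_\d)\ge c^{-3}\mu(f)\,\P_z(s<\tau_\d)$ for every $s\ge t_0$ and every $z\in E$. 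For $s<t_0$ I instead use that $s\mapsto\P(s<\tau_\d)$ is non-increasing together with $\P_\mu(t_0<\tau_\d)\ge c^{-1}\mu(f)$ (again from~\eqref{eq:tse} at $E$) and $\P_z(s<\tau_\d)\le1$, to reach the same inequality. This is~(A2) with $c_2=c^{-3}\mu(f)$; the cited results then give the quantitative convergence estimate of the introduction and, in particular, the existence and uniqueness of $\alpha$.

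For~\eqref{eq:borne-QSD} I use the stationarity identity $\alpha=\P_\alpha(X_{t_0}\in\cdot\mid t_0<\tau_\d)$: by~\eqref{eq:tse} the numerator $\int_E\P_z(X_{t_0}\in\cdot)\,\alpha(dz)$ lies between $c^{-1}\alpha(f)\mu(\cdot)$ and $c\alpha(f)\mu(\cdot)$, and the denominator $\P_\alpha(t_0<\tau_\d)$ lies between $c^{-1}\alpha(f)$ and $c\alpha(f)$, so dividing gives $c^{-2}\mu\le\alpha\le c^2\mu$ (note $\alpha(f)>0$ since $f>0$ on $E$). For~\eqref{eq:contraction} I apply~\eqref{eq:intro-contr}, noting $c_1c_2=c^{-5}\mu(f)$, so that it only remains to bound $c(\pi_i)=\inf_{t\ge0}\P_{\pi_i}(t<\tau_\d)/\sup_{z\in E}\P_z(t<\tau_\d)$ from below. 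For $s\ge t_0$, the two-sided bound on $\P_z(t_0+t<\tau_\d)$ above gives $\P_{\pi_i}(s<\tau_\d)\ge c^{-1}\pi_i(f)\,\P_\mu(s-t_0<\tau_\d)$ and $\sup_z\P_z(s<\tau_\d)\le c^2\,\P_\mu(s-t_0<\tau_\d)$, so the ratio is $\ge c^{-3}\pi_i(f)$; for $s<t_0$ the bounds $\P_{\pi_i}(s<\tau_\d)\ge\P_{\pi_i}(t_0<\tau_\d)\ge c^{-1}\pi_i(f)$ and $\sup_z\P_z(s<\tau_\d)\le1$ give the same. Hence $c(\pi_1)\vee c(\pi_2)\ge c^{-3}(\pi_1(f)\vee\pi_2(f))$, and substituting into~\eqref{eq:intro-contr} yields~\eqref{eq:contraction}.

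All the computations are elementary; the only delicate points — the nearest thing to an obstacle — are the bookkeeping of which of the two inequalities in~\eqref{eq:tse} to use at each step so that the constants come out exactly as stated ($c^{-2}$, $c^{-3}\mu(f)$, $c^{-5}\mu(f)$, $c^3$, $c^{\pm2}$), and the separate treatment of the short times $s<t_0$ in both~(A2) and the lower bound on $c(\pi_i)$, where the Markov-property-at-$t_0$ argument is unavailable and one must fall back on monotonicity of $t\mapsto\P(t<\tau_\d)$ and on the direct estimate $\P_\mu(t_0<\tau_\d)\ge c^{-1}\mu(f)$.
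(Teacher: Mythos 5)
Your proof is correct and follows essentially the same route as the paper: (A1) is obtained by dividing the lower two-sided bound by the upper one at a set and at $E$; (A2) by combining the two-sided estimate with the Markov property at time $t_0$ and, for $t<t_0$, with monotonicity of $t\mapsto\P(t<\tau_\d)$ (your derivation pivots scalar probabilities around $\P_\mu(t<\tau_\d)$, whereas the paper first establishes the measure inequality $\P_\pi(X_{t_0}\in\cdot)\geq c^{-3}\pi(f)\P_z(X_{t_0}\in\cdot)$ and then applies the semigroup, but the ingredients and the constants are identical); and the contraction bound and~\eqref{eq:borne-QSD} are extracted exactly as in the paper from~\eqref{eq:intro-contr} and the stationarity of $\alpha$.
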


\begin{rem}
  \label{rem:spectral-gap}
  Recall that to any quasi-stationary distribution $\alpha$ is associated an eigenvalue $-\lambda_0\leq 0$. We deduce from the
  two-sided estimate~\eqref{eq:tse} and~\cite[Corollary~2.4]{champagnat-villemonais-15} an explicit estimate on the second spectral
  gap of the infinitesimal generator $L$ of $X$ (defined as acting on bounded measurable functions on $E\cup\{\d\}$): for all
  $\lambda$ in the spectrum of $L$ such that $\lambda\notin\{0,\lambda_0\}$, the real part of $\lambda$ is smaller than
  $-\lambda_0+t_0^{-1}\log(1-c^{-5}\mu(f))$.
\end{rem}

\begin{rem}
  \label{rem:KP}
  In particular, we recover the results of Knobloch and Partzsch~\cite{KnoblochPartzsch2010}. They proved
  that~\eqref{eq:intro-expo-cv} holds for a class of diffusion processes evolving in~$\R^d$ ($d\geq 3$), assuming continuity of the
  transition density, existence of ground states and the existence of a two-sided estimate involving the ground states of the
  generator. Similar results were obtained in the one-dimensional case in~\cite{miura-14}.
\end{rem}

\begin{proof}[Proof of Theorem~\ref{thm:two-sided-estimates}]
We deduce from~\eqref{eq:tse} that, for all $x\in E$,
\begin{align}
  \label{eq:bound-QSD}
  c^{-2}\,\mu(\cdot)\leq \P_x(X_{t_0}\in\cdot\mid t_0<\tau_\d)=\frac{\P_x(X_{t_0}\in\cdot)}{\P_x(X_{t_0}\in E)} \leq c^2\mu(\cdot).
\end{align}
We thus obtain (A1) with $c_1=c^{-2}$ and $\nu=\mu$.

Moreover, for any probability measure $\pi$ on $E$ and any $z\in E$,
\begin{align*}
  \P_\pi(X_{t_0}\in\cdot)&\geq c^{-1}\pi(f)\mu(\cdot)\\
  &\geq \frac{f(z)}{\|f\|_\infty}c^{-1}\pi(f)\mu(\cdot)\\
  &\geq c^{-3}\pi(f)\P_z(X_{t_0}\in\cdot).
\end{align*}
Hence, for all $t\geq t_0$, we have by Markov's property
\begin{align*}
  \P_\pi(t<\tau_\d)&=\E_{\pi}\left(\P_{X_{t_0}}(t-t_0<\tau_\d)\right)\\
  &\geq c^{-3}\pi(f)\, \E_z\left(\P_{X_{t_0}}(t-t_0<\tau_\d)\right)\\
  &= c^{-3}\pi(f)\, \P_z(t<\tau_\d).
\end{align*}
When $t\leq t_0$, we have $\P_\pi(t<\tau_\d)\geq \P_\pi(t_0<\tau_\d) \geq c^{-1}\pi(f)\geq c^{-3}\pi(f)$ and hence
$\P_\pi(t<\tau_\d)\geq c^{-3}\pi(f)\P_z(t<\tau_\d)$, so that
$$
c(\pi):=\inf_{t\geq 0}\frac{\P_{\pi}(t<\tau_\d)}{\sup_{z\in E}\P_{z}(t<\tau_\d)}\geq c^{-3}\pi(f).
$$
Taking $\pi=\nu=\mu$, this entails (A2) for $c_2=c^{-3}\mu(f)$ and~\eqref{eq:intro-contr} implies~\eqref{eq:contraction}. The
inequality~\eqref{eq:borne-QSD} then follows from~\eqref{eq:bound-QSD}.
\end{proof}

\section{Quasi-stationary behavior under gradient estimates}
\label{sec:main}

In this section, we explain how gradient estimates on the semi-group of the Markov process $(X_t,t\geq 0)$ imply the exponential
convergence~\eqref{eq:intro-expo-cv}.

\subsection{A general result}
\label{sec:general-result}

We assume that the process $X$ is a strong Markov, continuous\footnote{The assumption of continuity is only used to ensure that the
  entrance times in compact sets are stopping times for the natural filtration (cf.\ e.g.~\cite[p.\,48]{le-gall}), and hence that the
  strong Markov property applies at this time. Our result would also hold true for c\`adl\`ag (weak) Markov processes provided that
  the strong Markov property applies at the hitting times of compact sets.} process and we assume that its state
space $E\cup\{\d\}$ is a compact metric space with metric $\rho$ equipped with its Borel $\sigma$-field. Recall that $\d$ is absorbing
and that we assume that $\P_x(t<\tau_\d)>0$ for all $x\in E$ and $t\geq 0$. Our result holds true under three conditions: first, we
assume that there exists $t_1>0$ such that the process satisfies a gradient estimate of the form: for all bounded measurable function
$f:E\cup\{\d\}\rightarrow\R$
\begin{equation}
  \label{eq:wang-g}
  \|\nabla P_{t_1}f\|_\infty\leq C\|f\|_\infty,
\end{equation}
where $P_tf(x)=\E_x(f(X_t)\11_{t<\tau_\d})$ denotes the Dirichlet semi-group of $X$ and the (a bit informal in such a general
setting) notation $\|\nabla P_{t_1}f\|_\infty$ has to be understood as
$$
\|\nabla P_{t_1}f\|_\infty:=\sup_{x,y\in E\cup\{\d\}}\frac{|P_{t_1}f(x)-P_{t_1}f(y)|}{\rho(x,y)}.
$$
Second, we assume that there exist a compact subset $K$ of $E$ and a constant $C'>0$ such that, for all $x\in E$,
\begin{align}
\label{eq:majoration-g}
\P_x(T_{K} \leq t_1 <\tau_\d)\geq C'\,\rho_{\d}(x),
\end{align}
where $\rho_{\d}(x):=\rho(x,\d)$ and $T_{K}=\inf\{t\geq 0,\ X_t\in K\}$. Finally, we need the following irreducibility condition:
for all $x,y\in E$ and all $r>0$,
\begin{equation}
  \label{eq:irreducibility-g}
  \P_x(X_s\in B(y,r),\ \forall s\in[t_1,2t_1])>0,
\end{equation}
where $B(y,r)$ denotes the ball of radius $r$ centered at $y$.

\begin{thm}
  \label{thm:general-criterion}
  Assume that the process $(X_t,t\geq 0)$ satisfies~\eqref{eq:wang-g},~\eqref{eq:majoration-g} and~\eqref{eq:irreducibility-g} for
  some constant $t_1>0$. Then Condition~(A) and hence~\eqref{eq:intro-expo-cv} are satisfied. Moreover, there exist two constants
  $B,\gamma>0$ such that, for any initial distributions $\mu_1$ and $\mu_2$ on $E$,
  \begin{multline}
    \label{eq:thm:general-criterion}
    \left\|\P_{\mu_1}(X_t\in\cdot\mid t<\tau_\d)-\P_{\mu_2}(X_t\in\cdot\mid t<\tau_\d)\right\|_{TV}\\
    \leq \frac{Be^{-\gamma t}}{\mu_1(\rho_{\d})\vee \mu_2(\rho_{\d})}\|\mu_1-\mu_2\|_{TV}.
  \end{multline}
\end{thm}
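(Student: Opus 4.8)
The plan is to verify Condition~(A), since the quantitative bound~\eqref{eq:thm:general-criterion} will then follow from~\eqref{eq:intro-contr} once we also produce a lower bound of the form $c(\mu_i)\geq \text{const}\cdot\mu_i(\rho_\d)$; the latter is the natural analogue of what was done for $c(\pi)\geq c^{-3}\pi(f)$ in the proof of Theorem~\ref{thm:two-sided-estimates}, with $\rho_\d$ playing the role of $f$. So the real work is establishing (A1) and (A2), and the bound on $c(\mu)$ along the way.

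First I would use the gradient estimate~\eqref{eq:wang-g} to get a Harnack-type comparison: for a bounded nonnegative $f$ and any two points $x,y$, $|P_{t_1}f(x)-P_{t_1}f(y)|\leq C\rho(x,y)\|f\|_\infty\leq C\,\mathrm{diam}(E\cup\{\d\})\,\|f\|_\infty$. Combined with the absorption-control estimate~\eqref{eq:majoration-g}, which forces $P_{t_1}\mathbbm{1}(y)=\P_y(t_1<\tau_\d)\geq C'\rho_\d(y)$, this should let me compare $P_{t_1}f(x)$ to $P_{t_1}\mathbbm{1}(x)$ near points where the process has already entered $K$: on $K$, $\rho_\d$ is bounded below by some $\delta_K>0$, so $P_{t_1}\mathbbm{1}$ is bounded below by $C'\delta_K$ on $K$. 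The next step is the key localization: starting from $x$, decompose the event $\{2t_1<\tau_\d\}$ using the strong Markov property at time $T_K$ on the event $\{T_K\leq t_1<\tau_\d\}$, whose probability is at least $C'\rho_\d(x)$ by~\eqref{eq:majoration-g}. After time $T_K$ the process sits in the compact set $K$, and I would use the irreducibility condition~\eqref{eq:irreducibility-g} together with a compactness/covering argument to produce a uniform lower bound: there exist $t_*$ (a small multiple of $t_1$) and $c_*>0$ and a probability measure $\nu$ supported near some fixed ball such that $\P_z(X_{t_*}\in\cdot,\ t_*<\tau_\d)\geq c_*\nu(\cdot)$ for all $z\in K$. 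Here the standard device is: by~\eqref{eq:irreducibility-g} each $z$ has positive probability of being in a fixed small ball $B(y_0,r)$ throughout $[t_1,2t_1]$; a lower-semicontinuity argument (using the Markov property and the continuity of the process, plus a Portmanteau-type argument on the open event) upgrades this to a bound uniform over the compact $K$.

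Chaining these pieces gives, for all $x\in E$, a bound $\P_x(X_{s_0}\in\cdot,\ s_0<\tau_\d)\geq c'\rho_\d(x)\,\nu(\cdot)$ for a suitable $s_0$ (of order $t_1$) and $\nu$. Dividing by $\P_x(s_0<\tau_\d)\leq 1$ yields (A1) with $t_0=s_0$, $c_1=c'\inf_x\rho_\d(x)$ — wait, that infimum can be $0$; instead one divides by $\P_x(s_0<\tau_\d)$ and must show this is $\leq c''\rho_\d(x)$ as well. That upper bound is exactly the other half: iterating~\eqref{eq:majoration-g}-type reasoning backwards, or more simply observing $\P_x(s_0<\tau_\d)\leq \P_x(\text{something}) $ — actually the clean route is to bound $\P_x(2t_1<\tau_\d)\leq \P_x(t_1<\tau_\d)$ and then use~\eqref{eq:wang-g} on $f=\mathbbm{1}$ together with the fact that $P_{t_1}\mathbbm{1}(\d)=0$ to get $\P_x(t_1<\tau_\d)=|P_{t_1}\mathbbm{1}(x)-P_{t_1}\mathbbm{1}(\d)|\leq C\rho(x,\d)=C\rho_\d(x)$. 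This gives both the upper bound needed for (A1) and, via the Markov-property computation as in the proof of Theorem~\ref{thm:two-sided-estimates}, the lower bound $c(\mu)\geq c'''\mu(\rho_\d)$ that feeds~\eqref{eq:intro-contr}. For (A2) one takes $\pi=\nu$ and notes $\nu(\rho_\d)>0$ since $\nu$ is supported away from $\d$ (it lives near $B(y_0,r)\subset E$), so $c_2:=c(\nu)\geq c'''\nu(\rho_\d)>0$.

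The main obstacle I anticipate is the compactness/uniformity step turning the pointwise irreducibility~\eqref{eq:irreducibility-g} into a uniform minorization over $z\in K$: one needs some regularity of $z\mapsto\P_z(X_s\in B(y_0,r)\ \forall s\in[t_1,2t_1])$, and in this abstract setting the only available input is the strong Markov property and path-continuity. The trick is to write this probability, for $z$ near a fixed $z_0$, as an expectation over the position at a small time and invoke lower semicontinuity of $z\mapsto \P_z(\text{open event})$ which follows from Fatou plus weak continuity of $x\mapsto\P_x$ on continuous-path space — a point where~\eqref{eq:wang-g} is again useful since gradient estimates give (Feller-type) continuity of the semigroup, hence of $z\mapsto P_s g(z)$ for continuous $g$, which bootstraps to the needed semicontinuity for the finite-dimensional-cylinder approximations of the path event. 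Once uniformity on $K$ is in hand, the rest is assembling inequalities exactly in the style of the Theorem~\ref{thm:two-sided-estimates} proof.
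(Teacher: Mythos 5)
The plan diverges from what actually works at the two hardest steps, and both divergences produce genuine gaps.

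\textbf{The (A1) minorization.} You propose to build, directly from~\eqref{eq:wang-g} and~\eqref{eq:irreducibility-g}, a \emph{single} probability measure $\nu$ and a constant $c_*>0$ such that $\P_z(X_{t_*}\in\cdot,\,t_*<\tau_\d)\geq c_*\nu(\cdot)$ for all $z\in K$. But the hypotheses do not give this. The irreducibility condition~\eqref{eq:irreducibility-g} together with the Lipschitz continuity of $z\mapsto P_{t_1}g(z)$ coming from~\eqref{eq:wang-g} do let you bound $\P_z(X_s\in B(y_0,r)\ \forall s\in[t_1,2t_1])$ uniformly below over a compact; this is exactly what the paper does in its Step~1 to get~\eqref{eq:step1}. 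However, a uniform lower bound on the probability of a ball event is not a Doeblin-type minorization of the kernel $\delta_zP_{t_*}$ by a fixed measure: it gives $\P_z(X_{t_*}\in B(y_0,r))\geq c_*$, not $\P_z(X_{t_*}\in A)\geq c_*\nu(A)$ for all $A$. The gradient estimate only controls oscillations of $P_{t_1}\11_A$, and for a tiny set $A$ the quantity $P_{t_1}\11_A$ can be uniformly small, so there is no way to manufacture a fixed $\nu$ without a genuine density lower bound (Aronson-type), which is precisely what the abstract theorem is trying to avoid. The paper circumvents this by working with the equivalent Condition~(A'), in which the reference measure $\nu_{x,y}$ is allowed to depend on the pair $(x,y)$; it is built from the \emph{infimum measure} $\mu_{x,y}$ of $\delta_xP_{2t_1}$ and $\delta_yP_{2t_1}$, which is a lower bound for both by construction. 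The real work is then Lemma~\ref{lem:positivity}, which uses~\eqref{eq:wang-g} and~\eqref{eq:irreducibility-g} to show that $(x,y)\mapsto\mu_{x,y}(f)$ is continuous and positive, so that a compactness argument on $K\times K$ gives a uniform lower bound on its total mass. That infimum-measure device is the missing idea.

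\textbf{The (A2) bound.} You want to obtain $c(\mu)\geq\text{const}\cdot\mu(\rho_\d)$ ``via the Markov-property computation as in Theorem~\ref{thm:two-sided-estimates}'' and then deduce (A2) by taking $\pi=\nu$. In Theorem~\ref{thm:two-sided-estimates} that computation works because the two-sided estimate gives \emph{both} an upper and a lower Doeblin bound on the kernel, so one can compare $\P_\pi(t<\tau_\d)$ against $\sup_z\P_z(t<\tau_\d)$ for all $t$ at once. In the gradient setting you do not have an upper kernel bound, and once you unwind the chain you find you need precisely a Harnack-type comparison $\P_\nu(t<\tau_\d)\geq c_2\sup_z\P_z(t<\tau_\d)$, uniformly in $t$ --- which is (A2) itself. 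So the argument is circular. The paper breaks the circle with Lemma~\ref{lem:wang-better}, the improved gradient estimate $\|\nabla P_t\11_E\|_\infty\leq C''\|P_t\11_E\|_\infty$ valid for all $t\geq 4t_1$ (note the right-hand side is $\|P_t\11_E\|_\infty$, not $\|\11_E\|_\infty$, and it holds for all large $t$). This forces the normalized survival function $h_t=P_t\11_E/\|P_t\11_E\|_\infty$ to be uniformly Lipschitz, so it stays bounded away from $0$ on a ball of fixed radius around its maximizer $z_t$, which must lie in the fixed compact $K'$; integrating $\nu_{x,y}$ against $h_t$ then yields (A2') uniformly in $t$. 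Your outline does not contain this argument or any substitute for it.

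The remaining pieces of your plan --- the bound $\P_x(t_1<\tau_\d)\leq C\rho_\d(x)$ from~\eqref{eq:wang-g} applied to $\11_E$, the use of $T_K$ and the strong Markov property, and the observation that $\inf_{y\in K}\rho_\d(y)>0$ --- all match the paper and are fine. But the two steps above are where the theorem's content lives, and as written your proof does not get past either of them.
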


The proof of this result is given in Section~\ref{sec:proof}.

\subsection{The case of diffusions in compact Riemannian manifolds}
\label{sec:particular-case}

In this section, we provide two sets of assumptions for diffusions in compact manifolds with boundary $M$ absorbed at the boundary
$\d M$ (i.e.\ $E=M\setminus \d M$ and $\d=\{\d M\}$) to which the last theorem applies:
\begin{enumerate}
\item[S1.] $M$ is a bounded, connected and closed $C^2$ Riemannian manifold with $C^2$ boundary $\d M$ and the infinitesimal
  generator of the diffusion process $X$ is given by $L=\frac{1}{2}\Delta+Z$, where $\Delta$ is the Laplace-Beltrami operator and $Z$
  is a $C^1$ vector field.
\item[S2.] $M$ is a compact subset of $\R^d$ with non-empty, connected interior and $C^2$ boundary $\d M$ and $X$ is solution to the
  SDE $dX_t=s(X_t)dB_t+b(X_t)dt$, where $(B_t,t\geq 0)$ is a $r$-dimensional Brownian motion, $b:M\rightarrow\R^d$ is bounded and
  continuous and $s:M\rightarrow \R^{d\times r}$ is continuous, $ss^*$ is uniformly elliptic and for all $r>0$,
  \begin{align}
    \sup_{x,y\in M,\ |x-y|=r}\frac{|s(x)-s(y)|^2}{r}\leq g(r)
    \label{eq:hyp-sigma-priola-wang} 
  \end{align}
  for some function $g$ such that $\int_0^1 g(r)dr<\infty$.
\end{enumerate}

Note that~\eqref{eq:hyp-sigma-priola-wang} is satisfied as soon as $s$ is uniformly $\alpha$-H\"older on $M$ for some $\alpha>0$. 

Let us now check that Theorem~\ref{thm:general-criterion} applies in both situations.

First, the gradient estimate~\eqref{eq:wang-g} is satisfied (see Wang in~\cite{wang-04} and Priola and Wang in~\cite{Priola2006},
respectively). These two references actually give a stronger version of~\eqref{eq:wang-g}:
\begin{equation}
\label{eq:gradEst}
  \|\nabla P_{t}f\|_\infty\leq \frac{c}{1\wedge\sqrt{t}}\,\|f\|_\infty,\quad\forall t>0.
\end{equation}
The set of assumptions S2 is not exactly the same as in~\cite{Priola2006}, but they clearly imply (i), (ii), (iv) of~\cite[Hyp.
4.1]{Priola2006} (see~\cite[Lemma 3.3]{Priola2006} for the assumption on $s$) and, since we assume that $M$ is bounded and $C^2$,
assumptions (iii') and (v) are also satisfied (see~\cite[Rk.\ 4.2]{Priola2006}). Moreover, the gradient estimate of~\cite{Priola2006} is stated for $x\in M\setminus \d M\mapsto P_t f(x)$, but can be easily extended to $x\in M$ since $P_t f(x)\rightarrow 0$ when $x\rightarrow \d M$. Note also that in both references, the gradient
estimates are obtained for not necessarily compact manifolds.

The irreducibility assumption~\eqref{eq:irreducibility-g} is an immediate consequence of classical support theorems~\cite[Exercise
6.7.5]{stroock-varadhan-79} for any value of $t_1>0$.

It only remains to prove the next lemma.

\begin{lem}
  \label{lem:majoration}
  There exist $t_1,\varepsilon,C'>0$ such that, for all $x\in M$,
  \begin{align}
    \label{eq:majoration}
    \P_x(T_{\varepsilon} \leq t_1 <\tau_\d)\geq C'\,\rho_{\d M}(x),
  \end{align}
  where $\rho_{\d M}(x)$ is the distance between $x$ and $\d M$, $T_{\varepsilon}=\inf\{t\geq 0,\ X_t\in M_{\varepsilon}\}$ and the
  compact set $M_\varepsilon$ is defined as $\{x\in M:\rho_{\d M}(x)\geq\varepsilon\}$.
\end{lem}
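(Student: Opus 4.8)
The plan is to use that near $\d M$ the scalar process $r_t:=\rho_{\d M}(X_t)$ is, until it leaves a tubular collar, a one-dimensional semimartingale with bounded drift and uniformly elliptic bracket, for which the two gambler's-ruin facts one expects hold: started at distance $a$ from the absorbing level $0$, it reaches a fixed level $2\varepsilon$ before $0$ with probability $\gtrsim a$, and it exits the strip $(0,2\varepsilon)$ in mean time $\lesssim a$. Combining these (the second serving to upgrade the first to a statement valid in bounded time) with the strong Markov property at the hitting time of the ``far'' region $M_{2\varepsilon}$ will reduce the claim to an interior non-degeneracy estimate. Concretely, since $\d M$ is $C^2$ the distance function $\rho_{\d M}$ is $C^2$ on a one-sided collar $\{\rho_{\d M}<\varepsilon_0\}$ for some $\varepsilon_0>0$ depending only on $M$; I fix $\varepsilon\in(0,\varepsilon_0/2)$ (and write $M_\delta=\{\rho_{\d M}\ge\delta\}$ for $\delta>0$, as in the statement) and set $\kappa:=\sup_{\{\rho_{\d M}\le 2\varepsilon\}}|L\rho_{\d M}|<\infty$, where $L$ denotes the generator. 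By Itô's formula, up to the exit time $\sigma:=\inf\{t\ge 0:\ \rho_{\d M}(X_t)\notin(0,2\varepsilon)\}$ of that collar one has $r_t=r_0+N_t+\int_0^t D_s\,ds$, where $N$ is a continuous local martingale whose bracket has density $\Gamma(\rho_{\d M})(X_t)\in[\lambda_-,\lambda_+]\subset(0,\infty)$ (by uniform ellipticity and $|\nabla\rho_{\d M}|\equiv 1$) and $|D_t|\le\kappa$. Writing $\tau_\d=\inf\{t:\ r_t=0\}$ and $\tau_{2\varepsilon}:=\inf\{t:\ r_t=2\varepsilon\}$, one has $\sigma=\tau_\d\wedge\tau_{2\varepsilon}$ whenever $\rho_{\d M}(x)<2\varepsilon$, so that on $\{\tau_{2\varepsilon}<\tau_\d\}$ the process is alive at $\tau_{2\varepsilon}$, with $X_{\tau_{2\varepsilon}}\in M_{2\varepsilon}\subset M_\varepsilon$ and hence $T_\varepsilon\le\tau_{2\varepsilon}$.

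The core of the argument consists of two applications of Itô's formula with exponential test functions. First, with $\psi(r):=e^{\mu_1 r}-1$ and $\mu_1:=2\kappa/\lambda_-$, the process $\psi(r_{t\wedge\sigma})$ is a bounded submartingale, since its drift density satisfies $\psi'(r_t)D_t+\tfrac12\psi''(r_t)\Gamma(\rho_{\d M})(X_t)\ge\mu_1 e^{\mu_1 r_t}(\tfrac12\mu_1\lambda_--\kappa)=0$; as $\psi(0)=0$, optional stopping gives, for $\rho_{\d M}(x)\le 2\varepsilon$,
\begin{align*}
  \P_x(\tau_{2\varepsilon}<\tau_\d)\ \ge\ \frac{\psi(\rho_{\d M}(x))}{\psi(2\varepsilon)}\ \ge\ c_1\,\rho_{\d M}(x),\qquad c_1:=\frac{\mu_1}{e^{2\mu_1\varepsilon}-1}>0.
\end{align*}
Second, with $v(r):=1-e^{-\mu' r}$ and $\mu':=4\kappa/\lambda_-$, one checks that $L(v\circ\rho_{\d M})\le-c''<0$ on the collar (with $c'':=\mu'\kappa e^{-2\mu'\varepsilon}$), so $v(r_{t\wedge\sigma})+c''(t\wedge\sigma)$ is a supermartingale; since $0\le v(r)\le\mu' r$ this yields $\E_x[\sigma]\le(\mu'/c'')\,\rho_{\d M}(x)$, hence $\P_x(\sigma>s)\le\frac{\mu'}{c''s}\,\rho_{\d M}(x)$ for every $s>0$ (in particular $\sigma<\infty$ a.s., which justifies the optional stopping above). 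Choosing $t_1$ so large that $\frac{2\mu'}{c''t_1}\le\frac{c_1}{2}$, one gets, for $\rho_{\d M}(x)\le 2\varepsilon$,
\begin{align*}
  \P_x\big(\tau_{2\varepsilon}\le t_1/2,\ \tau_{2\varepsilon}<\tau_\d\big)\ \ge\ \P_x(\tau_{2\varepsilon}<\tau_\d)-\P_x(\sigma>t_1/2)\ \ge\ \tfrac{c_1}{2}\,\rho_{\d M}(x),
\end{align*}
while this probability equals $1$ when $\rho_{\d M}(x)>2\varepsilon$.

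To conclude, I would apply the strong Markov property at $\tau_{2\varepsilon}$ (a stopping time, as $M_{2\varepsilon}$ is compact and $X$ continuous): on $\{\tau_{2\varepsilon}\le t_1/2,\ \tau_{2\varepsilon}<\tau_\d\}$ one has $T_\varepsilon\le\tau_{2\varepsilon}\le t_1$, $X_{\tau_{2\varepsilon}}\in M_{2\varepsilon}$ and the process alive, so, since $u\mapsto\P_z(u<\tau_\d)$ is non-increasing,
\begin{align*}
  \P_x\big(T_\varepsilon\le t_1<\tau_\d\big)\ \ge\ \E_x\Big[\11_{\tau_{2\varepsilon}\le t_1/2,\,\tau_{2\varepsilon}<\tau_\d}\,\P_{X_{\tau_{2\varepsilon}}}\big(t_1-\tau_{2\varepsilon}<\tau_\d\big)\Big]\ \ge\ p'\,\P_x\big(\tau_{2\varepsilon}\le t_1/2,\ \tau_{2\varepsilon}<\tau_\d\big),
\end{align*}
where $p':=\inf_{z\in M_{2\varepsilon}}\P_z(t_1<\tau_\d)$. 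I would then check $p'>0$ via the support theorem already invoked for~\eqref{eq:irreducibility-g}: from any $z\in M_{2\varepsilon}$ the process stays within distance $\varepsilon/2$ of $z$ --- hence in $\mathrm{int}\,M_\varepsilon$, hence alive --- throughout $[0,t_1]$ with positive probability, and this probability is lower semicontinuous in $z$, so it is bounded below on the compact set $M_{2\varepsilon}$. Together with the preceding step, this gives~\eqref{eq:majoration} with $C':=\min\{\,p'c_1/2,\ p'/\sup_M\rho_{\d M}\,\}$.

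The step I expect to be the main obstacle is obtaining the \emph{linear}-in-$\rho_{\d M}(x)$ control of the strip exit time $\E_x[\sigma]=O(\rho_{\d M}(x))$ (the continuous analogue of the fact that a random walk exits $(0,b)$ from $a$ in mean time $\asymp a$): merely knowing that $\P_x(\sigma>t_1/2)$ is small for $t_1$ large would not suffice, since it must be dominated by the linear lower bound $\P_x(\tau_{2\varepsilon}<\tau_\d)\gtrsim\rho_{\d M}(x)$ uniformly as $\rho_{\d M}(x)\to 0$. The exponential test functions $v$ and $\psi$ are exactly what make both estimates insensitive to the sign of the drift $L\rho_{\d M}$, i.e.\ to the mean curvature of $\d M$ and to the drift of $X$; the $C^2$-regularity of $\rho_{\d M}$ near $\d M$ and the non-degeneracy $p'>0$ are then routine.
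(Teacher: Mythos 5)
Your proof is correct and follows the same overall strategy as the paper's: reduce to the one-dimensional process $r_t=\rho_{\d M}(X_t)$ inside a collar where $\rho_{\d M}$ is $C^2$ and $r_t$ is a uniformly elliptic Itô process with bounded drift; show, linearly in $\rho_{\d M}(x)$, that the process reaches a fixed interior level before absorption, and that it exits the collar in linear mean time; combine these via Markov's inequality to localize the hitting within a fixed time $t_1$; then apply the strong Markov property at the hitting time and use $\inf_{z\in K}\P_z(t_1<\tau_\d)>0$. Where you and the paper differ is in how the two one-dimensional estimates are produced. The paper first makes a deterministic time change turning the martingale part of $r$ into a Brownian motion, so that $r$ dominates a Brownian motion with constant drift $-a$; it then invokes the standard scale-function/Green-function machinery for one-dimensional diffusions — the scale function $x\mapsto(e^{2ax}-1)/(2a)$ produces a martingale, and \cite[Lemma~23.10]{kallenberg-02} gives $\E_u[T_0\wedge T_{\varepsilon_1/2}]\lesssim u$, which is then combined with optional stopping of the martingale at $s_1\wedge T_0\wedge T_{\varepsilon_1/2}$. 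You instead work directly with the Itô decomposition of $r_t$, building by hand a bounded submartingale $\psi(r_{t\wedge\sigma})$ from $\psi(r)=e^{\mu_1 r}-1$ and a supermartingale $v(r_{t\wedge\sigma})+c''(t\wedge\sigma)$ from $v(r)=1-e^{-\mu' r}$. This avoids both the time change and the appeal to the Green formula, at the price of tuning $\mu_1,\mu'$ explicitly; the substance — the linear-in-$\rho_{\d M}(x)$ bound on the expected collar exit time, which you correctly flag as the crux — is identical. A further minor difference: you lower-bound $p'=\inf_{z\in M_{2\varepsilon}}\P_z(t_1<\tau_\d)$ via the support theorem together with lower semicontinuity and compactness, while the paper obtains the analogous positivity from the support theorem together with Lipschitz continuity of $P_{t_1}\11_E$ furnished by the gradient estimate~\eqref{eq:gradEst}; both are valid and your version is more self-contained. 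One small slip: for $\rho_{\d M}(x)>2\varepsilon$ the event $\{\tau_{2\varepsilon}\le t_1/2,\ \tau_{2\varepsilon}<\tau_\d\}$ does \emph{not} have probability $1$ under the definition $\tau_{2\varepsilon}=\inf\{t:\ r_t=2\varepsilon\}$; what you actually use in that regime is simply $T_\varepsilon=0$, which your final constant $C'=\min\{p'c_1/2,\ p'/\sup_M\rho_{\d M}\}$ already reflects correctly, so the gap is only in the phrasing.
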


\begin{proof}[Proof of Lemma~\ref{lem:majoration}]
Let $\varepsilon_0>0$ be small enough for $\rho_{\d M}$ to be $C^2$ on $M\setminus M_{\varepsilon_0}$. For all $t<
T_{\varepsilon_0}$, we define $ Y_t=\rM(X_t). $ In both situations S1 and S2, we have
\begin{align*}
dY_t= \sigma_t dW_t+b_t dt,
\end{align*}
where $W$ is a standard Brownian motion, where $\sigma_t\in[\underline{\sigma},\bar{\sigma}]$ and $|b_t|\leq\bar{b}$ are adapted
continuous processes, with $0<\underline{\sigma},\bar{\sigma},\bar{b}<\infty$. There exists a differentiable time-change $\tau(s)$ 
such that $\tau(0)=0$ and 
\begin{align*}
\widetilde{W}_s:=\int_0^{\tau(s)} \sigma_t dW_t
\end{align*}
is a Brownian motion and $\tau'(s)\in [\bar{\sigma}^{-2},\underline{\sigma}^{-2}]$. In addition, 
\begin{align*}
\int_0^{\tau(s)} b_t\,dt \geq -\bar{b} \tau(s)\geq -\bar{b} \underline{\sigma}^{-2} s.
\end{align*}
As a consequence, setting $Z_s=Y_{0}+\widetilde{W}_s-\bar{b} \underline{\sigma}^{-2} s$, we have almost surely $Z_s\leq Y_{\tau(s)}$ for all $s$
such that $\tau(s)\leq T_{\varepsilon_0}$. 

Setting $a=\bar{b} \underline{\sigma}^{-2}$, the function
$$
f(x)= \frac{e^{2ax}-1}{2a}
$$
is a scale function for the drifted Brownian motion $Z$. The diffusion process defined by $N_t=f(Z_{t})$ is a martingale and its
speed measure is given by $s(dv)=\frac{dv}{(1+2av)^2}$. The Green formula for one-dimensional diffusion processes~\cite[Lemma
23.10]{kallenberg-02} entails, for $\varepsilon_1=f(\varepsilon_0)$ and all $u\in(0,\varepsilon_1/2)$ (in the following lines,
$\P_u^N$ denotes the probability with respect to $N$ with initial position $N_0=u$),
\begin{align}
  \P_u^N(t\leq T^N_0\wedge T^N_{\varepsilon_1/2})&\leq \frac{\E^N_u(T^N_0\wedge T^N_{\varepsilon_1/2})}{t}=
  \frac{2}{t}\,\int_0^{\varepsilon_1/2} \left(1-\frac{u\vee v}{\varepsilon_1/2}\right)(u\wedge v)s(dv)\notag\\
  &\leq u\,\frac{C_{\varepsilon_1}}{t},\ \text{ where $C_{\varepsilon_1}=2\,\int_0^{\varepsilon_1/2}\frac{dv}{(1+2av)^2}$,}\label{eq:previneq}
\end{align}
where we set $T^N_\varepsilon=\inf\{t\geq 0,\ N_t=\varepsilon\}$. Let us fix $s_1=\varepsilon_1 C_{\varepsilon_1}$. Since $N$ is a
martingale, we have, for all $u\in(0,\varepsilon_1/2)$,
\begin{align*}
  u=\E^N_u(N_{s_1\wedge T^N_{\varepsilon_1/2}\wedge T^N_{0}})&\leq \frac{\varepsilon_1}{2} \P_u^N(T^N_{\varepsilon_1/2}\leq s_1\wedge
  T^N_0)+\frac{\varepsilon_1}{2} \P^N_u(s_1<T^N_{\varepsilon_1/2}\wedge T^N_0)\\
  &\leq \frac{\varepsilon_1}{2} \P^N_u(T^N_{\varepsilon_1/2}\leq s_1\wedge T^N_0)+\frac{u}{2}.
\end{align*}
Hence there exists a constant $A>0$ such that $\P^N_u(T^N_{\varepsilon_1/2}\leq s_1\wedge T^N_0) \geq A\,u$, or, in other words,
\begin{align*}
  \P_x(T^Z_{\varepsilon}\leq\underline{\sigma}^2 t_1\wedge T^Z_0) \geq A\,f(\rho_{\d M}(x))\geq A\, \rho_{\d M}(x)
\end{align*}
for all $x\in M\setminus M_{\varepsilon}$, where $t_1=s_1 \underline{\sigma}^{-2}$ and $\varepsilon=f^{-1}(\varepsilon_1/2)$.

Now, using the fact that the derivative of the time change $\tau(s)$ belongs to $[\bar{\sigma}^{-2},\underline{\sigma}^{-2}]$ and
that $Z_s\leq Y_{\tau(s)}$, it follows that for all $x\in M\setminus M_{\varepsilon}$,
\begin{align*}
  \P_x(T_{\varepsilon}^Y \leq t_1\wedge T_0^Y) &\geq\P_x (T^Z_{\varepsilon}\leq\underline{\sigma}^2 t_1\wedge T^Z_0) \geq A {\rho_{\d
      M}(x)}.
\end{align*}
Therefore,
\begin{align*}
  \P_x(T_{\varepsilon}^Y\leq t_1< T_0^Y)&\geq \E_x\left[\11_{T_{\varepsilon}^Y\leq t_1\wedge T_0^Y}\,\P_{X_{T_{\varepsilon}^Y}}(t_1<\tau_\d)\right]\\
  &\geq \P_x (T_{\varepsilon}^Y\leq t_1\wedge T_0^Y)\,\inf_{y\in M_{\varepsilon}}\P_{y}(t_1<\tau_\d) 
  \geq C' {\rho_{\d M}(x)},
\end{align*}
where we used that $\inf_{y\in M_{\varepsilon}}\P_{y}(t_1<\tau_\d)>0$. This last fact follows from the inequality
$\P_{y}(t_1<\tau_\d)>0$ for all $y\in M\setminus\d M$, consequence of~\eqref{eq:irreducibility-g} and from the Lipschitz-continuity
of $y\mapsto \P_{y}(t_1<\tau_\d)=P_{t_1}\11_E(y)$, consequence of~\eqref{eq:gradEst}.

Finally, since $T_{\varepsilon}=0$ under $\P_x$ for all $x\in M_{\varepsilon}$, replacing $C'$ by $C'\wedge[\inf_{y\in M_{\varepsilon}}\P_{y}(t_1<\tau_\d)/\text{diam}(M)]$
entails~\eqref{eq:majoration} for all $x\in M$.
\end{proof}

\begin{rem}
  \label{rem:with-killing}
  The gradient estimates of~\cite{Priola2006} are proved for diffusion processes with space-dependent killing
  rate $V:M\rightarrow[0,\infty)$. More precisely, they consider infinitesimal generators of the form
  \begin{align*}
    L=\frac{1}{2}\sum_{i,j=1}^d[ss^*]_{ij}\partial_{i}\partial_j+\sum_{i=1}^d b_i\partial_i-V
  \end{align*}
  with $V$ bounded measurable. Our proof also applies to this setting.
\end{rem}

\begin{rem}
  \label{rem:aronson}
  We have proved in particular that Condition~(A1) is satisfied in situations S1 and S2. This is a minoration of conditional
  distributions of the diffusion. For initial positions in compact subsets of $M\setminus\d M$, this reduces to a lower bound for the
  (unconditioned) distribution of the process. Such a result could be obtained from density lower bounds using number of techniques,
  for example Aronson-type estimates~\cite{aronson,wang,zhang} or continuity properties~\cite{Dynkin2002}. Note that our result does
  not rely on such techniques, since it will appear in the proof that Conditions~\eqref{eq:wang-g} and~\eqref{eq:irreducibility-g}
  are sufficient to obtain $\P_x(X_{t_0}\in\cdot)\geq \widetilde{\nu}$ for all $x\in M_\varepsilon$ for some positive measure
  $\widetilde{\nu}$.
\end{rem}

\subsection{Proof of Theorem~\ref{thm:general-criterion}}
\label{sec:proof}

The proof is based on the following equivalent form of Condition~(A) (see~\cite[Thm.\,2.1]{champagnat-villemonais-15})

\paragraph{Condition~(A').} There exist $t_0,c_1,c_2>0$ such that 
\begin{itemize}
\item[(A1')]  for all $x,y\in E$, there exists a probability measure $\nu_{x,y}$ on $E$ such that
  $$
  \PP_x(X_{t_0}\in\cdot\mid t_0<\tau_\partial)\geq c_1\nu_{x,y}(\cdot)\quad\text{and}\quad\PP_y(X_{t_0}\in\cdot\mid t_0<\tau_\partial)\geq
  c_1\nu_{x,y}(\cdot)
  $$
\item[(A2')]  for all $x,y,z\in E$ and all $t\geq 0$,
  $$
  \PP_{\nu_{x,y}}(t<\tau_\partial)\geq c_2\PP_z(t<\tau_\partial).
  $$
\end{itemize}

Note that (A1') is a kind of coupling for conditional laws of the Markov process starting from different initial conditions. It is
thus natural to use gradient estimates to prove such conditions since they are usually obtained by coupling of the paths of the
process (see~\cite{wang-04,Priola2006}).

We divide the proof into four steps. In the first one, we obtain a lower bound for $\P_x(X_{2t_1}\in K\mid 2t_1<\tau_\d)$. The
second and third ones are devoted to the proof of (A1') and (A2'), respectively. The last one gives the proof
of~\eqref{eq:thm:general-criterion}.

\subsubsection{Return to a compact conditionally on non-absorption}
\label{sec:step-1}

The gradient estimate~\eqref{eq:wang-g} applied to $f=\11_E$ implies that $P_{t_1}\11_E$ is Lipschitz. Since
$\P_\d(t_1<\tau_\d)=0$, we obtain, for all $x\in E$,
\begin{align}
  \label{eq:minoration}
  \P_x(t_1<\tau_\d)\leq C\,\rho_{\d}(x).
\end{align}
Combining this with Assumption~\eqref{eq:majoration-g}, we deduce that, for all $x\in E$,
\begin{align*}
  \PP_x(T_{K}\leq t_1\mid t_1<\tau_\d) & =\frac{\PP_x(T_{K}\leq t_1<\tau_\d)}{\PP_x(t_1<\tau_\d)}\geq \frac{C'}{C}.
\end{align*}
Fix $x_0\in K$ and let $r_0=d(x_0,\d)/2$.  We can assume without loss of
generality that $B(x_0,r_0)\subset K$, since
Assumption~\eqref{eq:majoration-g} remains true if one replaces $K$ by
the set $K\cup \overline{B(x_0,r_0)}\subset E$ (which is also a compact set, as
a closed subset of the compact set $E\cup\{\d\}$). Then, it follows from~\eqref{eq:irreducibility-g} that, for all $x\in E$,
$$
\E_x\left[\P_{X_{t_1}}(X_s\in B(x_0,r_0),\ \forall s\in [0,t_1])\right]=\P_x(X_s\in B(x_0,r_0),\ \forall s\in[t_1,2t_1])>0.
$$
Because of~\eqref{eq:wang-g}, the left-hand side is continuous w.r.t.\ $x\in M$, and hence
$$
\inf_{x\in K}\P_x(X_s\in K,\ \forall s\in[t_1,2t_1])\geq \inf_{x\in K}\P_x(X_s\in B(x_0,r_0),\ \forall s\in[t_1,2t_1])>0.
$$
Therefore, it follows from the strong Markov property at time $T_K$ that
\begin{align*}
  \P_x(X_{2t_1}\in K\mid 2t_1<\tau_\d) & \geq\frac{\P_x(X_{2t_1}\in K)}{\P_x(t_1<\tau_\d)} \\
  & \geq\frac{\P_x(T_K\leq t_1\text{ and }X_{T_K+s}\in K,\ \forall s\in[t_1,2t_1])}{\P_x(t_1<\tau_\d)} \\
  & \geq \inf_{x\in K}\P_x(X_s\in K,\ \forall s\in[t_1,2t_1])\frac{\P_x(T_K\leq t_1)}{\P_x(t_1<\tau_\d)}.
\end{align*}
Therefore, we have proved that, for all $x\in E$,
\begin{equation}
  \label{eq:step1}
  \P_x(X_{2t_1}\in K \mid 2t_1<\tau_\d)\geq A,
\end{equation}
for the positive constant $A:=\inf_{x\in K}\P_x(X_s\in K,\ \forall s\in[t_1,2t_1]) C'/C$.

\subsubsection{Proof of (A1')}

For all $x,y\in E$, let $\mu_{x,y}$ be the infimum measure of $\delta_x P_{2t_1}$ and $\delta_y P_{2t_1}$, i.e.\ for all measurable
$A\subset E$,
$$
\mu_{x,y}(A):=\inf_{A_1\cup A_2=A,\ A_1,A_2\text{ measurable}}(\delta_x P_{2t_1}\mathbbm{1}_{A_1}+\delta_y P_{2t_1}\mathbbm{1}_{A_2}).
$$
The proof of (A1') is based on the following lemma.

\begin{lem}
  \label{lem:positivity}
  For all bounded continuous function $f:E\rightarrow\R_+$ not identically 0, the function $(x,y)\in E^2\mapsto \mu_{x,y}(f)$ is
  Lipschitz and positive.
\end{lem}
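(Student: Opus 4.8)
The plan is to prove Lemma~\ref{lem:positivity} in two parts: Lipschitz continuity of $(x,y)\mapsto\mu_{x,y}(f)$, and strict positivity. For the Lipschitz part, I would use the elementary identity
$$
\mu_{x,y}(f) = \delta_x P_{2t_1}f \wedge \delta_y P_{2t_1}f - \tfrac12\|\delta_x P_{2t_1} - \delta_y P_{2t_1}\|_{TV}\,?
$$
— more carefully, the total mass of the infimum measure of two (sub)probability measures $\mu,\nu$ is $\frac12(\mu(E)+\nu(E)-\|\mu-\nu\|_{TV})$, and more generally for a bounded $f\ge 0$ one has the representation $\mu_{x,y}(f)=\inf_{0\le g\le f}\big(P_{2t_1}g(x)+P_{2t_1}(f-g)(y)\big)$ where the infimum is over measurable $g$. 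From this representation, for $x,y,x',y'\in E$ and any admissible $g$,
$$
P_{2t_1}g(x)+P_{2t_1}(f-g)(y) \le P_{2t_1}g(x')+P_{2t_1}(f-g)(y') + \|\nabla P_{2t_1}g\|_\infty\rho(x,x') + \|\nabla P_{2t_1}(f-g)\|_\infty\rho(y,y').
$$
Applying the gradient estimate~\eqref{eq:wang-g} at time $t_1$ together with the semigroup property $P_{2t_1}=P_{t_1}P_{t_1}$ (so that $P_{2t_1}h = P_{t_1}(P_{t_1}h)$ and $\|P_{t_1}h\|_\infty\le\|h\|_\infty$) gives $\|\nabla P_{2t_1}g\|_\infty\le C\|g\|_\infty\le C\|f\|_\infty$ and likewise for $f-g$. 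Taking the infimum over $g$ on both sides yields $|\mu_{x,y}(f)-\mu_{x',y'}(f)|\le C\|f\|_\infty(\rho(x,x')+\rho(y,y'))$, i.e. the desired Lipschitz bound with an explicit constant.

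For positivity, fix $(x,y)\in E^2$ and a nonzero bounded continuous $f\ge 0$; then there is a ball $B(y_0,r)$ and $\delta>0$ with $f\ge\delta\,\mathbbm 1_{B(y_0,r)}$. It suffices to show $\mu_{x,y}(B(y_0,r))>0$, i.e. that $\delta_x P_{2t_1}$ and $\delta_y P_{2t_1}$ both put positive mass on $B(y_0,r)$ — but that alone is not enough for the infimum measure; one needs a genuine lower bound on the infimum measure restricted to a common set. Here is where I would invoke Step~1. By~\eqref{eq:step1}, $\P_x(X_{2t_1}\in K)\ge A\,\P_x(t_1<\tau_\d)>0$ for every $x\in E$. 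Next I claim that on the compact set $K$ the measures $\delta_x P_{2t_1}\restriction{}{K}$ admit a common lower bound: using the irreducibility~\eqref{eq:irreducibility-g} (as already exploited in Step~1 to control $\P_x(X_s\in B(x_0,r_0)\ \forall s\in[t_1,2t_1])$) together with continuity coming from~\eqref{eq:wang-g}, one shows that for each ball $B\subset E$ the map $x\mapsto \P_x(X_{2t_1}\in B)$ is continuous and, being positive at each point of $E$ by~\eqref{eq:irreducibility-g}, is bounded below by a positive constant on the compact $K$. Combining: for any ball $B$,
$$
\delta_x P_{2t_1}(B) \ge \E_x\big[\mathbbm 1_{X_{t_1}\in K}\,\P_{X_{t_1}}(X_{t_1}\in B)\big] \ge \Big(\inf_{z\in K}\P_z(X_{t_1}\in B)\Big)\,\P_x(X_{t_1}\in K),
$$
where I would first propagate the return-to-$K$ estimate to time $t_1$ (or rerun Step~1's argument with $t_1$ in place of $2t_1$) so that $\P_x(X_{t_1}\in K)\ge A'\,\P_x(t_1/2<\tau_\d)>0$, or more simply use the strong Markov property at $T_K\le t_1$ as in Step~1. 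This gives $\delta_x P_{2t_1}(B)\ge \beta_B\,\P_x(\cdot)$-type bounds with a constant uniform over a useful range; the point is that the lower bound on $\delta_x P_{2t_1}(B)$ and $\delta_y P_{2t_1}(B)$ both factor through the \emph{same} measure $\inf_{z\in K}\P_z(X_{t_1}\in\cdot)$ restricted to $B$, so the infimum measure inherits a positive mass on $B=B(y_0,r)$, whence $\mu_{x,y}(f)\ge\delta\,\mu_{x,y}(B(y_0,r))>0$.

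I expect the main obstacle to be the positivity half, specifically making precise the claim that the infimum measure $\mu_{x,y}$ is bounded below — uniformly in $(x,y)$ would be ideal, though for the lemma pointwise positivity suffices — on some fixed ball. The subtlety is that positivity of each marginal on a set does not imply positivity of the infimum measure; one truly needs a common minorizing measure $\widetilde\nu$ with $\delta_x P_{2t_1}\ge \widetilde\nu$ on $K$ for all $x$ in a relevant set. The clean way is: (i) by Step~1 type reasoning, $X$ started anywhere reaches $K$ before $t_1$ with probability bounded below (relative to survival); (ii) by strong Markov at $T_K$ and the irreducibility-plus-continuity argument, from $K$ the law at the remaining time lands in any prescribed ball with probability bounded below by a positive constant depending only on the ball; (iii) hence $\delta_x P_{2t_1}\,\restriction{}{B(y_0,r)} \ge c_{B(y_0,r)}\,\P_x(t_1<\tau_\d)\,\mathrm{Unif}$-type lower bound, but since we only need $\mu_{x,y}(f)>0$ for fixed $x,y$ (with continuity handled separately), it is enough that $\delta_x P_{2t_1}$ and $\delta_y P_{2t_1}$ each dominate a common positive multiple of a fixed nontrivial measure on $B(y_0,r)$. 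Once this common minorization is in hand, $\mu_{x,y}(f)\ge\mu_{x,y}(\delta\,\mathbbm 1_{B(y_0,r)})>0$ follows from the definition of the infimum measure, completing the proof.
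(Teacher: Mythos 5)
Your Lipschitz argument is correct and essentially the paper's: represent $\mu_{x,y}(f)$ as an infimum over decompositions (the paper writes $\mu_{x,y}(f)=\inf_{A_1\cup A_2=E}\{P_{2t_1}(f\mathbbm{1}_{A_1})(x)+P_{2t_1}(f\mathbbm{1}_{A_2})(y)\}$, equivalent to your $\inf_{0\le g\le f}$), push~\eqref{eq:wang-g} through $P_{2t_1}=P_{t_1}P_{t_1}$ to get a uniform Lipschitz bound $C\|f\|_\infty$ for every term, and pass it through the infimum.

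The positivity half has a genuine gap, one you in fact name yourself ("positivity of each marginal on a set does not imply positivity of the infimum measure; one truly needs a common minorizing measure") and then fail to close. Your proposed route is to bound
$$
\delta_x P_{2t_1}(B)\ \ge\ \Big(\inf_{z\in K}\P_z(X_{t_1}\in B)\Big)\,\P_x(X_{t_1}\in K),
$$
argue the same for $y$, and conclude that "the two bounds factor through the same measure $\inf_{z\in K}\P_z(X_{t_1}\in\cdot)$." But $B\mapsto\inf_{z\in K}\P_z(X_{t_1}\in B)$ is only superadditive, not a measure; a positive lower bound on every ball gives no control on the infimum measure (two mutually singular measures can each charge every ball while having infimum measure zero). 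What step (iii) of your plan actually needs is a Doeblin minorization $\P_z(X_{t_1}\in\cdot)\ge\varepsilon\,m(\cdot)$ uniformly over $z\in K$, which nothing in~\eqref{eq:wang-g}--\eqref{eq:irreducibility-g} provides. Two smaller points: invoking~\eqref{eq:step1} here is unnecessary (the paper's proof of this lemma uses only~\eqref{eq:wang-g} and~\eqref{eq:irreducibility-g}, not the compact $K$ from~\eqref{eq:majoration-g}), and the bound you cite should read $\P_x(X_{2t_1}\in K)\ge A\,\P_x(2t_1<\tau_\d)$, not $A\,\P_x(t_1<\tau_\d)$.

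The paper closes the gap with a two-level construction you do not anticipate. It introduces the time-$t_1$ infimum measures $\bar\mu_{u,u'}$ of $\delta_u P_{t_1}$ and $\delta_{u'}P_{t_1}$ and shows, by the very same gradient-estimate argument, that $(u,u')\mapsto\bar\mu_{u,u'}(f)$ is continuous. Picking $x_1,d_1$ with $\inf_{B(x_1,d_1)}f>0$, irreducibility gives $\bar\mu_{x_1,x_1}(f)=\delta_{x_1}P_{t_1}f>0$, so continuity yields $r_1,a_1>0$ with $\bar\mu_{u,u'}(f)\ge a_1$ for all $u,u'\in B(x_1,r_1)$. Then, using $P_{t_1}g(u)\ge\bar\mu_{u,u'}(g)$ for any $u'$ and $\delta_y P_{t_1}(E)\le 1$, one gets for every nonnegative $g$
$$
\delta_x P_{2t_1}g\ \ge\ \iint \mathbbm{1}_{u,u'\in B(x_1,r_1)}\,\bar\mu_{u,u'}(g)\,\delta_x P_{t_1}(du)\,\delta_y P_{t_1}(du'),
$$
and by symmetry the same lower bound holds for $\delta_y P_{2t_1}g$. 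The right-hand side \emph{is} a genuine measure in $g$, identical for both marginals, hence it minorizes $\mu_{x,y}$, and evaluating at $f$ gives $\mu_{x,y}(f)\ge a_1\,\P_x(X_{t_1}\in B(x_1,r_1))\,\P_y(X_{t_1}\in B(x_1,r_1))>0$. The key idea you miss is that the common minorizing measure must itself be built out of the time-$t_1$ infimum measures $\bar\mu_{u,u'}$, whose positivity on the diagonal and continuity off it are exactly what~\eqref{eq:irreducibility-g} and~\eqref{eq:wang-g} supply.
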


\begin{proof}
By~\eqref{eq:wang-g}, for all bounded measurable $g:E\rightarrow\R$,
\begin{equation}
  \label{eq:wang2}
  \|\nabla P_{2t_1}g\|_\infty=\|\nabla P_{t_1}(P_{t_1}g)\|_\infty\leq C\|P_{t_1}g\|_\infty\leq C\|g\|_{\infty}.
\end{equation}
Hence, for all $x,y\in E$,
\begin{equation}
  |P_{2t_1} g(x)-P_{2t_1}g(y)|\leq C\|g\|_\infty\rho(x,y).  
\end{equation}
This implies the uniform Lipschitz-continuity of $P_{2t_1}g$. In particular, we deduce that
$$
\mu_{x,y}(f)=\inf_{A_1\cup A_2=E}\left\{P_{2t_1}(f\mathbbm{1}_{A_1})(x)+P_{2t_1}(f\mathbbm{1}_{A_2})(y)\right\}
$$
is continuous w.r.t.\ $(x,y)\in E^2$ (and even Lipschitz).

Let us now prove that $\mu_{x,y}(f)>0$. Let us define $\bar{\mu}_{x,y}$ as the infimum measure of $\delta_x P_{t_1}$ and $\delta_y
P_{t_1}$: for all measurable $A\subset E$,
$$
\bar{\mu}_{x,y}(A):=\inf_{A_1\cup A_2=A}(\delta_x P_{t_1}\mathbbm{1}_{A_1}+\delta_y P_{t_1}\mathbbm{1}_{A_2}).
$$
The continuity of $(x,y)\mapsto\bar{\mu}_{x,y}(f)$ on $E^2$ holds as above.

Fix $x_1\in E$ and $d_1>0$ such that $\inf_{x\in B(x_1,d_1)} f(x)>0$. Then~\eqref{eq:irreducibility-g} entails
\begin{align*}
  \bar{\mu}_{x_1,x_1}(f) & =\delta_{x_1}P_{t_1}f\geq\P_{x_1}(X_{t_1}\in B(x_1,d_1))\,\inf_{x\in B(x_1,d_1)}f(x)>0.
\end{align*}
Therefore, there exist $r_1,a_1>0$ such that $\bar{\mu}_{x,y}(f)\geq a_1$ for all $x,y\in B(x_1,r_1)$.

Hence, for all nonnegative measurable $g:E\rightarrow\R_+$ and for all $x,y\in E$ and all $u'\in E$,
\begin{align}
  \delta_x P_{2t_1}g & \geq \int_E\11_{u\in B(x_1,r_1)}P_{t_1}g(u)\,\delta_x P_{t_1}(du) \notag \\
  & \geq \int_E\11_{u,u'\in B(x_1,r_1)}\bar{\mu}_{u,u'}(g)\,\delta_x P_{t_1}(du). \label{eq:calcul}
\end{align}
Integrating both sides of the inequality w.r.t.\ $\delta_y P_{t_1}(du')$, we obtain
\begin{align*}
  \delta_x P_{2t_1}g &\geq  \delta_x P_{2t_1}g \delta_y P_{t_1}(E)\geq \iint_{E\times E}\11_{u,u'\in
    B(x_1,r_1)}\bar{\mu}_{u,u'}(g)\,\delta_x P_{t_1}(du)\,\delta_y P_{t_1}(du').
\end{align*}
Since this holds for all nonnegative measurable $g$ and since $\mu_{x,y}$ is the infimum measure between $\delta_x P_{2t_1}$ and
$\delta_y P_{2t_1}$, by symmetry, we have proved that
\begin{align*}
  \mu_{x,y}(\cdot) & \geq \iint_{E\times E}\11_{u,u'\in B(x_1,r_1)}\bar{\mu}_{u,u'}(\cdot)\,\delta_x P_{t_1}(du)\,\delta_y P_{t_1}(du').
\end{align*}
Therefore,~\eqref{eq:irreducibility-g} entails
\begin{equation*}
  \mu_{x,y}(f)\geq a_1\P_x(X_{t_1}\in B(x_1,r_1))\P_y(X_{t_1}\in B(x_1,r_1))>0. \qedhere
\end{equation*}
\end{proof}

We now construct the measure $\nu_{x,y}$ of Condition~(A1'). Using a similar computation as in~\eqref{eq:calcul} and integrating with
respect to $\delta_y P_{2t_1}(du')/\delta_y P_{2t_1}\11_E$, we obtain for all $x,y\in E$ and all nonnegative measurable
$f:E\rightarrow\R_+$
\begin{align*}
  \delta_x P_{4t_1}f&\geq
  \iint_{K\times K}  \mu_{u,u'}(f)\,\delta_x P_{2t_1}(du)\,\frac{\delta_y P_{2t_1}(du')}{\delta_y P_{2t_1}\11_E}.
\end{align*}
Since $\delta_x P_{4t_1}\11_E\leq \delta_x P_{2t_1}\11_E$,
\begin{align*}
  \frac{\delta_x P_{4t_1}f}{\delta_x P_{4t_1}\11_E}&\geq
  \iint_{K\times K} \mu_{u,u'}(f)\,\frac{\delta_x P_{2t_1}(du)}{\delta_x P_{2t_1}\11_E}\,\frac{\delta_y
    P_{2t_1}(du')}{\delta_y P_{2t_1}\11_E} \\ & = m_{x,y}\nu_{x,y}(f),
\end{align*}
where
\begin{align*}
  m_{x,y}:=\iint_{K\times K}\mu_{u,u'}(E)\,\frac{\delta_x P_{2t_1}(du)}{\delta_x P_{2t_1}\11_E}\,\frac{\delta_y
  P_{2t_1}(du')}{\delta_y P_{2t_1}\11_E}
\end{align*}
and
\begin{align}
  \label{eq:def-nu}
  \nu_{x,y}:=\frac{1}{m_{x,y}}\iint_{K\times K}\mu_{u,u'}(\cdot)\,\frac{\delta_x P_{2t_1}(du)}{\delta_x
    P_{2t_1}\11_E}\,\frac{\delta_y P_{2t_1}(du')}{\delta_y P_{2t_1}\11_E}.
\end{align}
Note that
\begin{align}
  m_{x,y} & \geq \inf_{u,u'\in K^2} \mu_{u,u'}(E) \iint_{K\times K}\frac{\delta_x P_{2t_1}(du)}{\delta_x
    P_{2t_1}\11_E}\,\frac{\delta_y P_{2t_1}(du')}{\delta_y P_{2t_1}\11_E} \notag \\ & \geq A^2 \inf_{u,u'\in K^2} \mu_{u,u'}(E)
  >0, \label{eq:lower-bound-mass-nu}
\end{align}
because of~\eqref{eq:step1} and Lemma~\ref{lem:positivity}. Hence the probability measure $\nu_{x,y}$ is well-defined and we have
proved (A1') for $t_0=4t_1$ and $c_1=A^2 \inf_{u,u'\in K^2} \mu_{u,u'}(E)$.

\subsubsection{Proof of (A2')}
\label{sec:proof-of-A2}

Our goal is now to prove Condition~(A2'). We first prove the following gradient estimate for $f=\mathbbm{1}_E$.

\begin{lem}
  \label{lem:wang-better}
  There exists a constant $C''>0$ such that, for all $t\geq 4t_1$,
  \begin{equation}
    \label{eq:wang-better}
    \|\nabla P_t \mathbbm{1}_E\|_\infty\leq C''\|P_t \mathbbm{1}_E\|_\infty.
  \end{equation}
\end{lem}

Note that, compared to~(\ref{eq:wang-g}), the difficulty is that we
replace $\|\11_E\|_\infty$ by the smaller
$\|P_{t_1}\11_E\|_\infty$ and that we extend this inequality to any time
$t$ large enough.

\begin{proof}
We first use~\eqref{eq:step1} to compute
\begin{align*}
  P_{4t_1}\mathbbm{1}_E(x) \geq \PP_x(X_{2t_1}\in K)\inf_{y\in K}\PP_y(2t_1<\tau_\d) 
  & \geq mA\PP_x(2t_1<\tau_\d),
\end{align*}
where $m:=\inf_{y\in K}\PP_y(2t_1<\tau_\d)$ is positive because of Lemma~\ref{lem:positivity}. Integrating the last inequality with
respect to $(\delta_yP_{t-4t_1})(dx)$ for any fixed $y\in E$ and $t\geq 4t_1$, we deduce that
$$
\|P_t \mathbbm{1}_E\|_\infty\geq m A \|P_{t-2t_1} \mathbbm{1}_E\|_\infty.
$$
Hence it follows from~\eqref{eq:gradEst} that, for all $t\geq 4t_1$,
\begin{align*}
  \|\nabla P_t\mathbbm{1}_E\|_\infty &=\|\nabla P_{t_1}(P_{t-t_1}\mathbbm{1}_E)\|_\infty
  \leq C \|P_{t-t_1}\mathbbm{1}_E\|_\infty\\
  & \leq C \|P_{t-2t_1}\mathbbm{1}_E\|_\infty \leq \frac{C}{mA}\|P_t \mathbbm{1}_E\|_\infty.
\end{align*}
This concludes the proof of Lemma~\ref{lem:wang-better}.
\end{proof}

This lemma implies that the function
\begin{align}
\label{eq:the-function}
h_t:x\in E\cup\{\d\} \mapsto \frac{P_t\mathbbm{1}_E(x)}{\|P_t\mathbbm{1}_E\|_\infty}
\end{align}
is $C''$-Lipschitz for all $t\geq 4t_1$. Since this function vanishes on $\d$ and its maximum over $E$ is 1, we deduce that, for any
$t\geq 4t_1$, there exists at least one point $z_t\in E$ such that $h_t(z_t)=1$. Since $h_t$ is $C''$-Lipschitz, we also deduce that $\rho(z_t,\d)\geq 1/C''$. Moreover, for all
$x\in E$,
\begin{align}
\label{eq:maj-step2}
\frac{P_t\mathbbm{1}_E(x)}{\|P_t\mathbbm{1}_E\|_\infty}\geq f_{z_t}(x),
\end{align}
 where, for all $z\in E$ and $x\in E$, $f_z(x)=\left( 1-C''\rho(x,z)\right)\vee 0$.
We define the compact set $K'=\{x\in E:\rho(x,\d)\geq 1/C''\}$ so that $z_t\in K'$ for all $t\geq 4t_1$. Then, for all $x,y\in E$ and
for all $t\geq 4t_1$, using the definition~\eqref{eq:def-nu} of $\nu_{x,y}$,
\begin{align*}
  \PP_{\nu_{x,y}}(t<\tau_\d) & \geq \|P_t\mathbbm{1}_E\|_\infty\,\nu_{x,y}(f_{z_t}) \\ 
  &  =\frac{\|P_t\mathbbm{1}_E\|_\infty}{m_{x,y}}\iint_{K\times K}\mu_{z,z'}(f_{z_t}) \,\frac{\delta_x
  P_{2t_1}(dz)}{\delta_x P_{2t_1}\11_E}\,\frac{\delta_y P_{2t_1}(dz')}{\delta_y P_{2t_1}\11_E}.
\end{align*}
Since $z\mapsto f_z$ is Lipschitz for the
  $\|\cdot\|_\infty$ norm (indeed,
  $|f_z(x)-f_{z'}(x)|\leq C''|\rho(x,z)-\rho(x,z')|\leq C''\rho(z,z')$
  for all $x,z,z'\in E^3$), it follows from Lemma~\ref{lem:positivity}
  that $(x,y,z)\mapsto \mu_{x,y}(f_{z})$ is positive and continuous on
  $E^3$. Hence
$c:=\inf_{x\in K,\,y\in K,\,z\in K'}\mu_{x,y}(f_{z})>0$ and, using
that $m_{x,y}\leq 1$,
\begin{align*}
  \PP_{\nu_{x,y}}(t<\tau_\d) & \geq c\|P_t\mathbbm{1}_E\|_\infty\iint_{K\times K}\frac{\delta_x P_{2t_1}(dz)}{\delta_x
    P_{2t_1}\11_E}\,\frac{\delta_y P_{2t_1}(dz')}{\delta_y P_{2t_1}\11_E} \geq c A^2\|P_t\mathbbm{1}_E\|_\infty,
\end{align*}
where the last inequality follows from~\eqref{eq:step1}.

This entails Condition~(A2') for all $t\geq 4t_1$. For $t\leq 4 t_1$, 
\begin{align*}
  \PP_{\nu_{x,y}}(t<\tau_\d) & \geq \PP_{\nu_{x,y}}(4t_1<\tau_\d)  \geq cA^2\|P_{4t_1}\11_E\|_\infty\\
  &\geq cA^2\|P_{4t_1}\11_E\|_\infty \sup_{z\in E} \P_z(t<\tau_\d)>0.
\end{align*}
This ends the proof of~(A2') and hence of~\eqref{eq:intro-expo-cv}.

\subsubsection{Contraction in total variation norm}
\label{sec:proof-of-contraction}

It only remains to prove~\eqref{eq:thm:general-criterion}. By~\eqref{eq:intro-contr}, we need to prove that there exists a constant $a>0$ such that, for all probability
measure $\mu$ on $E$,
\begin{align}
  \label{eq:last-goal}
  c(\mu)&:=\inf_{t\geq 0}\frac{\P_\mu(t<\tau_\d)}{\|P_t\11_E\|_\infty}\geq a\mu(\rho_\d).
\end{align}

Because of the equivalence between (A) and (A')~\cite[Theorem~2.1]{champagnat-villemonais-15}, enlarging $t_0$ and reducing $c_1 $
and $c_2$, one can assume without loss of generality that $\nu=\nu_{x,y}$ does not depend on $x,y\in E$. Then, using~(A1) and~(A2),
we deduce that, for all $t\geq t_0\geq 4t_1$,
\begin{align*}
  \P_\mu(t<\tau_\d)&=\mu(P_{t_0} P_{t-t_0} \11_E) \geq c_1\P_\mu(t_0<\tau_\d)\P_\nu(t-t_0<\tau_\d)\\
  &\geq c_1 c_2 \|P_{t-t_0}\11_E\|_\infty\P_\mu(t_0<\tau_\d)\geq c_1 c_2 \|P_{t}\11_E\|_\infty\P_\mu(t_0<\tau_\d).
\end{align*}
Now, using Assumption~\eqref{eq:majoration-g}, we deduce that
\begin{align*}
  \P_\mu(t_0<\tau_\d)\geq \E_\mu\left(\11_{T_K<t_1}\inf_{y\in K}\P_y(t_0<\tau_\d)\right)\geq C'\mu(\rho(\d,\cdot))\,\inf_{y\in K}\P_y(t_0<\tau_\d),
\end{align*}
where the constant $C'':=C' \inf_{y\in K}\P_y(t_0<\tau_\d)$ is positive. For $t\leq t_0$, the last inequality entails
\begin{align*}
  \P_\mu(t<\tau_\d)\geq \P_\mu(t_0<\tau_\d)\geq C''\mu(\rho(\d,\cdot))\geq C''\mu(\rho(\d,\cdot))\|P_t\11_E\|_\infty.
\end{align*}
Hence~\eqref{eq:last-goal} holds true with $a=c_1c_2 C''$. This ends the proof of Theorem~\ref{thm:general-criterion}.

\appendix

\section{Proof of~\eqref{eq:intro-contr}}
\label{sec:appendix}

Let us assume that Condition~(A) is satisfied. For all $t\geq 0$ and all probability measure $\pi$ on $E$, let $
c_t(\pi):=\frac{\pi(P_t\11_E)}{\|P_t\11_E\|_\infty}$. 
In the proof of~\cite[Corollary~2.2]{champagnat-villemonais-15}, it is proved that, for all probability measures $\pi_1,\pi_2$ on $E$
\begin{align*}
\left\|\P_{\pi_1}(X_t\in\cdot\mid t<\tau_\d)-\P_{\pi_2}(X_t\in\cdot\mid t<\tau_\d)\right\|_{TV} 
&\leq  \frac{(1-c_1c_2)^{\lfloor t/t_0\rfloor}}{c_t(\pi_1)\vee c_t(\pi_2)}\|\pi_1-\pi_2\|_{TV}.
\end{align*}
But 
\begin{align*}
\inf_{t\geq 0}c_t(\pi_1)\vee c_t(\pi_2)\geq (\inf_{t\geq 0} c_t(\pi_1))\vee (\inf_{t\geq 0} c_t(\pi_2))=c(\pi_1)\vee c(\pi_2).
\end{align*}
This ends the proof of~\eqref{eq:intro-contr}.

\bibliographystyle{abbrv}
\bibliography{biblio-bio,biblio-denis,biblio-math,biblio-math-nicolas}

\end{document}